\newcommand{\RR}{\mathbb{R}}
\newcommand{\HH}{\mathcal{H}}
\newcommand{\Lip}{\textnormal{Lip}}
\newcommand{\dist}{\textnormal{dist}}
\newtheorem{thm}{Theorem}[section]
\newtheorem{lemma}[thm]{Lemma}%[section]
\newtheorem{cor}[thm]{Corollary}%[section]
\theoremstyle{remark}
\theoremstyle{definition}
\newtheorem{definition}[thm]{Definition}%[section]
\begin{document}
\title{A note on topological dimension, Hausdorff measure, and rectifiability}
\author{Guy C. David} 
\address[David]{Department of Mathematical Sciences, Ball State University, Muncie, IN 47306}
\email{gcdavid@bsu.edu}

\author[Enrico Le Donne]{Enrico Le Donne}

\address[Le Donne]{Department of Mathematics and Statistics, 
University of Jyv\"askyl\"a, 40014 Jyv\"askyl\"a, Finland}
\email{enrico.ledonne@jyu.fi}
\email{ledonne@msri.org}
\date{July 7, 2018}

\thanks{G.C.D was partly funded by NSF DMS-1758709.
E.L.D. was partially supported by the Academy of Finland 
(grant 288501 `\emph{Geometry of subRiemannian groups}')
and by the European Research Council 
(ERC Starting Grant 713998 GeoMeG `\emph{Geometry of Metric Groups}').
}

\maketitle
	
\section{Introduction}
	
The purpose of this note is to record a consequence, for general metric spaces, of a recent result of Bate \cite{Bate}. We prove the following fact:

\begin{thm}\label{mainthm}
Let $X$ be a compact metric space of topological dimension $n$. Suppose that the $n$-dimensional Hausdorff measure of $X$, $\HH^n(X)$, is finite. 

Suppose further that
\begin{equation}\label{cj1}
\liminf_{r\rightarrow 0} \frac{\HH^n(B(x,r))}{r^n} > 0 \text{ for } \HH^n\text{-a.e. } x\in X.
\end{equation}

Then $X$ contains an $n$-rectifiable subset of positive $\HH^n$-measure.

Moreover, assumption \eqref{cj1} is unnecessary if one uses recently announced results of Cs\"ornyei-Jones.
\end{thm}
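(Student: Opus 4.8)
The plan is to play two facts about Lipschitz maps $f\colon X\to\RR^n$ against each other: topological dimension forces \emph{some} such maps to have a large image, while Bate's theorem forces the \emph{typical} such map to collapse the measure whenever $X$ is purely unrectifiable. I would fix $\mu=\HH^n\llcorner X$ and work in the space $\Lip_1(X,\RR^n)$ of $1$-Lipschitz maps $X\to\RR^n$ with the topology of uniform convergence; since $X$ is compact this is a closed subset of $(C(X,\RR^n),\|\cdot\|_\infty)$, hence complete and a Baire space. The goal is to show $X$ is not purely $n$-unrectifiable, which by definition means exactly that some $n$-rectifiable $\Gamma\subseteq X$ has $\mu(\Gamma)>0$, i.e.\ the desired subset.

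First I would extract the topological input. Since $X$ has topological dimension $n$ (in particular at least $n$), the classical stable-value characterization of topological dimension yields a continuous $g\colon X\to\RR^n$ with a stable value $y_0$: there is $\varepsilon>0$ such that $y_0\in h(X)$ for every $h$ with $\|h-g\|_\infty<\varepsilon$. A one-line translation argument upgrades the point to a ball: if $\|h-g\|_\infty<\varepsilon/2$ and $y'\in B(y_0,\varepsilon/2)$, then $h+(y_0-y')$ lies within $\varepsilon$ of $g$ and hence hits $y_0$, so $h$ hits $y'$; thus $B(y_0,\varepsilon/2)\subseteq h(X)$ and $\mathcal L^n(h(X))>0$. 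Approximating $g$ uniformly by a Lipschitz map and rescaling the target to normalize the Lipschitz constant, I obtain a nonempty open set $U\subseteq\Lip_1(X,\RR^n)$ every member of which has image of positive Lebesgue measure, equivalently positive $\HH^n$-measure (the two agree up to a dimensional constant on $\RR^n$). The same surjectivity onto a cube shows $\HH^n(X)>0$, so the statement is not vacuous.

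Next I would invoke the hypothesis and Bate's theorem. Assumption \eqref{cj1} says precisely that $\mu$ has positive lower $n$-density $\HH^n$-a.e., which is the standing density hypothesis of \cite{Bate}. Suppose toward a contradiction that $X$ is purely $n$-unrectifiable. Then Bate's theorem guarantees that $\{f\in\Lip_1(X,\RR^n):\HH^n(f(X))=0\}$ is residual. A residual set meets the nonempty open set $U$, producing an $f$ with $\HH^n(f(X))=0$ and $\HH^n(f(X))>0$ simultaneously, which is absurd. Hence $X$ is not purely $n$-unrectifiable, giving an $n$-rectifiable subset of positive $\HH^n$-measure.

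The step I expect to be the main obstacle is matching the function spaces and the exact form of Bate's conclusion: I must ensure the topological construction lands in precisely the space of maps for which \cite{Bate} proves genericity, which requires careful normalization of Lipschitz constants (passing between $\Lip_L$ and $\Lip_1$ via the scaling isometry $f\mapsto f/L$, under which residual sets, openness, and positivity of $\mathcal L^n(f(X))$ are all preserved) and requires that Bate's genericity be available in the form ``$\mathcal L^n$-null image'' rather than only as a statement about singularity of pushforward measures. The positive-lower-density assumption is exactly what makes Bate applicable, which is why \eqref{cj1} is imposed; the closing remark signals that the announced Cs\"ornyei--Jones results furnish the same genericity conclusion with no density hypothesis, after which \eqref{cj1} may be dropped and the Baire-category argument above applies verbatim.
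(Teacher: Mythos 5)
Your proposal is correct and follows essentially the same route as the paper: the stable-value characterization of topological dimension, the observation that every map uniformly close to a map with a stable value must cover a ball, and Bate's residuality theorem, played off against each other via Baire category. The only differences are cosmetic — you work with target $\RR^n$ and intersect the residual set with a nonempty open set of $\Lip_1(X,\RR^n)$, whereas the paper retracts onto $[0,1]^n$ and extracts a sequence of measure-collapsing Lipschitz maps converging uniformly to the stable-value map.
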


The use in Theorem \ref{mainthm} of the results of Cs\"ornyei-Jones arises purely through our use of the work of Bate \cite{Bate} (Theorem \ref{bate} below), and does not directly appear in any of the proofs here. %Without using the results of Cs\"ornyei-Jones, condition \eqref{cj1} can be shown to be unnecessary if $X$ is a subset of some Euclidean space.
See Bate's discussion just below \cite[Theorem 1.1]{Bate} for details concerning the announcement of Cs\"ornyei-Jones and the dependence of Theorem \ref{bate} on them. 

When $X$ is a subset of some Euclidean space, Theorem \ref{mainthm}, without assuming \eqref{cj1} or the results of Cs\"ornyei-Jones, appears to already be known (see \cite[p. 880]{SemmesICM}), as a consequence of the Besicovitch-Federer projection theorem. For general metric spaces, the Besicovitch-Federer theorem is unavailable \cite{BCW}, but Bate's work \cite{Bate} serves as our replacement. 

When $n=1$, Theorem~\ref{mainthm} (without assuming \eqref{cj1} or relying on the results of Cs\"ornyei-Jones) is a consequence of the fact that continua of finite $\HH^1$-measure are Lipschitz images of $[0,1]$ (see, e.g., \cite[Lemma 3.7]{Schul}), but this particular fact does not extend to $n>1$.

We now recall some background: For compact metric spaces, the commonly used notions of topological dimension (Lebesgue covering dimension, large/strong inductive dimension, and small/weak inductive dimension) agree. We refer the reader to \cite[Sections I.4 and II.5]{Nagata} for this fact and the relevant definitions. For Hausdorff measure and dimension, we refer the reader to \cite[Chapter 8]{He}.

An $\HH^n$-measurable subset $E$ of a metric space $X$ is called \textit{$n$-rectifiable} if
$$ \HH^n(E\setminus \bigcup_{i=1}^\infty f_i(F_i)) =0 $$
where $F_i$ are measurable subsets of $\RR^n$ and $f_i:F_i\rightarrow X$ are Lipschitz maps. By a theorem of Kirchheim \cite[Lemma 4]{Kirchheim}, one can equivalently take $f_i$ to be bi-Lipschitz mappings.  

A subset $E$ of a metric space $X$ is called \textit{purely $n$-unrectifiable} if it contains no $n$-rectifiable subsets of positive $\HH^n$-measure.

If a compact metric space $X$ has topological dimension $n$, then it is a well-known fact (see, e.g., \cite[Theorem 8.15]{He}) that $\HH^n(X)>0$, although certainly $X$ may have infinite $n$-dimensional Hausdorff measure or even Hausdorff dimension strictly larger than $n$, as is the case for classical fractals. Thus, Theorem~\ref{mainthm} says that in the extremal situation, one must see some Euclidean structure in the space.

Related results, in which a combination of $n$-dimensional topological behavior and $n$-dimensional measure theoretic behavior implies some type of rectifiability, can be found, for example, in \cite{DS93, DS00, SemmesICM, JKV, GCD16}. These results typically employ more quantitative assumptions to obtain more quantitative conclusions than our Theorem~\ref{mainthm}.

It is easy to see that the assumptions of Theorem~\ref{mainthm} (including \eqref{cj1}) do not imply $n$-rectifiability of the whole space $X$. For example, $X$ may be the disjoint union of the unit ball in $\mathbb{R}^n$ with a metric space that is a purely $n$-unrectifiable Cantor set of positive $n$-dimensional Hausdorff measure. 

More surprising is that the assumptions of Theorem~\ref{mainthm} do not imply $n$-rectifiability even if one assumes that $X$ is a compact $n$-dimensional topological manifold. In the appendix to \cite{SW}, Schul and Wenger construct a compact topological $n$-sphere with $\mathcal{H}^n(X)<\infty$ that contains a purely $n$-unrectifiable subset of positive measure.

On the other hand, Theorem~\ref{mainthm} implies that every open ball in a compact $n$-manifold with finite $\HH^n$-measure contains an $n$-rectifiable subset of positive $\HH^n$-measure. Note that there exist such manifolds with no bi-Lipschitz embedding into any Euclidean space \cite{Laakso, Semmes2}.
	
\section{Proof of Theorem \ref{mainthm}}
Given a metric space $X$ and $m\in\mathbb{N}$, let $\Lip_1(X,m)$ denote the space of bounded, $1$-Lipschitz functions $f\colon X\rightarrow \RR^m$, equipped with the supremum distance, which we denote $\dist$. This is a complete metric space, and hence residual subsets (in the sense of Baire category) are dense.

The proof of Theorem~\ref{mainthm} is based on the following recent result.

\begin{thm}[{Bate \cite[Theorem 1.1]{Bate}}]\label{bate}
Let $X$ be a complete, purely $n$-unrectifiable metric space with $\HH^n(X)<\infty$.  Suppose further that \eqref{cj1} holds.

Then the set of all $f\in Lip_1(X,m)$ with $\HH^n(f(X))=0$ is residual.

Moreover, assumption \eqref{cj1} is unnecessary if one uses recently announced results of Cs\"ornyei-Jones.
\end{thm}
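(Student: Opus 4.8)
The plan is to exhibit the set $G=\{f\in\Lip_1(X,m):\HH^n(f(X))=0\}$ as a dense $G_\delta$ subset of $\Lip_1(X,m)$, which (the space being complete, hence Baire) makes it residual. First I would replace Hausdorff measure by Hausdorff content: since each $f$ is $1$-Lipschitz and $X$ is compact, $\HH^n(f(X))=0$ exactly when the content $\HH^n_\infty(f(X))=0$. Uniform convergence of $1$-Lipschitz maps forces Hausdorff convergence of their (compact) images, and $\HH^n_\infty$ is upper semicontinuous along Hausdorff-convergent compacta: cover the limit by finitely many open sets realizing its content to within $\eta$, and note that all nearby sets are covered by the same family. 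Hence each $U_\epsilon:=\{f\in\Lip_1(X,m):\HH^n_\infty(f(X))<\epsilon\}$ is open, and $G=\bigcap_{k}U_{1/k}$. It therefore suffices to prove that every $U_\epsilon$ is dense.

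The content of the theorem is thus a perturbation statement: given a $1$-Lipschitz $f\colon X\to\RR^m$ and $\delta,\epsilon>0$, find a $1$-Lipschitz $g$ with $\dist(f,g)<\delta$ and $\HH^n_\infty(g(X))<\epsilon$. I would argue by contradiction. If some $U_\epsilon$ were not dense, its closed complement would contain a ball, producing a nonempty open family $W\subseteq\Lip_1(X,m)$ of maps all of whose images have content at least $\epsilon$ — a lower bound on the $n$-dimensional size of the image that is \emph{stable under every sufficiently small $1$-Lipschitz perturbation}. The objective is to convert this stability into genuine $n$-rectifiable structure inside $X$, contradicting pure $n$-unrectifiability.

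Concretely, I would use the stability to build $n$ independent Alberti representations of $\HH^n|_E$ on some subset $E\subseteq X$ of positive measure. The heuristic is that if the image content cannot be destroyed by wiggling $f$ independently in the $m$ coordinate directions, then $f$ must genuinely spread $X$ in $n$ independent directions on a set of positive measure; a slicing argument over the fibers of $f$ then yields an abundance of Lipschitz curve fragments along which $f$ varies in $n$ linearly independent directions, and assembling these fragments with their integral-geometric weights produces the $n$ independent representations. Assumption \eqref{cj1} enters precisely here: the positive lower density lets one compare $\HH^n|_X$ with its restrictions to the pieces carrying these curves and prevents the construction from degenerating; when \eqref{cj1} is dropped the curve extraction can collapse, and the announced Cs\"ornyei-Jones results are what replace it.

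Finally I would invoke the deep equivalence — Bate's Alberti-representation machinery together with Cs\"ornyei-Jones — that the existence of $n$ independent Alberti representations for $\HH^n|_E$ forces $E$ to be $n$-rectifiable. An $n$-rectifiable subset of positive measure contradicts pure $n$-unrectifiability, closing the argument. The main obstacle is exactly this passage from \emph{stable positive image content} to \emph{$n$ independent Alberti representations} and thence to rectifiability: one must engineer perturbations that simultaneously respect the $1$-Lipschitz bound, stay within $\dist<\delta$, and still expose $n$ independent directions of variation, keeping the contents additive and summable against $\HH^n(X)<\infty$ across a covering of $X$. It is this coordination — together with the rectifiability criterion it feeds — that demands the full strength of the Cs\"ornyei-Jones theory once the density assumption \eqref{cj1} is removed.
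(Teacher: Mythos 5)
First, a point about the comparison itself: the paper does not prove this statement at all. It is Bate's Theorem 1.1, quoted verbatim and used as a black box (the paper's own contribution begins with Corollary~2.2, which applies it). So your proposal has to be judged against Bate's original proof, which occupies essentially the whole of his paper. Your soft reduction does match the outer structure of that proof: a set is $\HH^n$-null iff it is $\HH^n_\infty$-null, and for \emph{compact} $X$ each set $U_\epsilon=\{f:\HH^n_\infty(f(X))<\epsilon\}$ is open (your finite-open-cover argument is correct there), so residuality reduces to density of each $U_\epsilon$. One caveat: the theorem assumes $X$ complete, not compact, so the Hausdorff-convergence-of-compact-images argument does not apply as stated; one needs the known reduction to a $\sigma$-compact subset of full measure (Lipschitz images of $\HH^n$-null sets are null, and one intersects the open dense sets over a compact exhaustion). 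That slip is fixable.

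The genuine gap is the density step, which is the entire content of the theorem and is not proved in your proposal. You set up the correct contrapositive --- a map all of whose nearby $1$-Lipschitz perturbations have image content at least $\epsilon$ should force an $n$-rectifiable subset of positive measure --- but the passage from ``stable positive image content'' to ``$n$ independent Alberti representations of $\HH^n$ restricted to a positive-measure subset'' is offered only as a heuristic: there is no construction of curve fragments, no argument for independence of the representations, no treatment of the cone and decomposition conditions that the notion requires, and you yourself flag this passage as ``the main obstacle'' without resolving it. The final appeal to the implication ``$n$ independent Alberti representations $\Rightarrow$ rectifiability'' (Bate--Li, completed by Cs\"ornyei--Jones, or available under \eqref{cj1}) is a legitimate citation of a genuinely earlier result, so that part is not circular in itself; but the step you leave open is precisely the technical core of Bate's paper --- his perturbation machinery, which in his argument runs in the opposite, constructive direction: pure unrectifiability implies the absence of Alberti representations, and that absence is converted, through a long explicit construction, into perturbations that collapse the image. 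A proof that black-boxes exactly that construction has assumed the theorem it set out to prove, so what you have is a correct reduction of the theorem to its hard part, not a proof.
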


This has the following easy consequence.
\begin{cor}\label{cor}
Let $X$ be a compact, purely $n$-unrectifiable metric space with $\HH^n(X)<\infty$ and satisfying \eqref{cj1}. 

Let $g:X\rightarrow [0,1]^n$ be continuous. Then there is a sequence of Lipschitz functions $f_i:X\rightarrow [0,1]^n$ that converge to $g$ in the supremum distance and satisfy $\HH^n(f_i(X))=0$ for all $i\in\mathbb{N}$.

Moreover, assumption \eqref{cj1} is unnecessary if one uses recently announced results of Cs\"ornyei-Jones.
\end{cor}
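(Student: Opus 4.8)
The plan is to reduce the statement, by two standard maneuvers, to the density of null-image maps furnished by Theorem~\ref{bate}. Throughout I would take $m=n$, so that $\Lip_1(X,n)$ is the relevant function space. Since $X$ is compact it is complete, and every continuous (hence every Lipschitz) function on it is bounded; thus the hypotheses of Theorem~\ref{bate} hold, and its conclusion---that the maps with $\HH^n$-null image form a residual, hence dense, subset of $\Lip_1(X,n)$---is at my disposal.

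First I would approximate $g$ uniformly by Lipschitz maps into the cube. Writing $g=(g^{(1)},\dots,g^{(n)})$ with each $g^{(j)}\colon X\to[0,1]$, the inf-convolutions
$$ g_k^{(j)}(x)=\inf_{y\in X}\bigl(g^{(j)}(y)+k\,\dist(x,y)\bigr) $$
are $k$-Lipschitz, take values in $[0,1]$, and converge uniformly to $g^{(j)}$ by uniform continuity of $g$ on the compact space $X$. Assembling the components yields $L_k$-Lipschitz maps $g_k\colon X\to[0,1]^n$ with $g_k\to g$ uniformly, where $L_k=k\sqrt{n}$.

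Next, fixing $\epsilon>0$ and an index $k$, I would observe that $g_k/L_k\in\Lip_1(X,n)$, so density of the null-image maps supplies $\tilde f\in\Lip_1(X,n)$ with $\dist(\tilde f,g_k/L_k)<\epsilon/L_k$ and $\HH^n(\tilde f(X))=0$. Then $\phi:=L_k\tilde f$ is Lipschitz, satisfies $\dist(\phi,g_k)<\epsilon$, and has $\HH^n(\phi(X))=L_k^n\,\HH^n(\tilde f(X))=0$, since dilating the target by $L_k$ multiplies $\HH^n$ by $L_k^n$. As $\phi$ need not map into $[0,1]^n$, I would finally compose with the nearest-point projection $P\colon\RR^n\to[0,1]^n$: being a projection onto a closed convex set, $P$ is $1$-Lipschitz and restricts to the identity on $[0,1]^n$. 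Hence $f:=P\circ\phi\colon X\to[0,1]^n$ is Lipschitz, and since $1$-Lipschitz maps do not increase $\HH^n$ we get $\HH^n(f(X))\le\HH^n(\phi(X))=0$. Using $g(X)\subseteq[0,1]^n$ and $P|_{[0,1]^n}=\mathrm{id}$,
$$ \dist(f,g)=\dist(P\circ\phi,P\circ g)\le\dist(\phi,g)\le\dist(\phi,g_k)+\dist(g_k,g)<\epsilon+\dist(g_k,g). $$
Taking $k$ large and $\epsilon$ small makes this arbitrarily small, so choosing it below $1/i$ produces the desired sequence $f_i$.

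The only genuinely delicate point is that Theorem~\ref{bate} speaks of $1$-Lipschitz maps, whereas the maps approximating $g$ carry an essentially arbitrary Lipschitz constant. The key observation dissolving this is that the null-image property is invariant under post-composition with a dilation of $\RR^n$, since such a dilation only rescales $\HH^n$ by a positive factor; this lets me rescale into $\Lip_1(X,n)$, apply density, and rescale back at the sole cost of rescaling the approximation tolerance. The range constraint, by contrast, is routine once one notes that the cube is convex, so its nearest-point projection is a $1$-Lipschitz retraction that can only move $\phi$ closer to $g$. I would also carry over the closing remark on Cs\"ornyei--Jones verbatim, as that dependence enters only through the invocation of Theorem~\ref{bate}.
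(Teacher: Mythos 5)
Your proposal is correct and follows essentially the same route as the paper: approximate $g$ by $L$-Lipschitz maps, rescale into $\Lip_1(X,n)$ to apply the density of null-image maps from Theorem~\ref{bate}, rescale back, and post-compose with the $1$-Lipschitz retraction onto $[0,1]^n$ (your nearest-point projection $P$ is exactly the paper's coordinatewise clamping $r$), using $1$-Lipschitzness to control both the image measure and the supremum distance. The only cosmetic difference is that you build the Lipschitz approximants explicitly by inf-convolution where the paper cites Stone--Weierstrass or \cite[Lemma 2.4]{Semmes}.
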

\begin{proof}
Let $h_i$ be a sequence of $L_i$-Lipschitz functions converging to $g$ in the supremum distance. (The existence of such a sequence is a consequence of the Stone-Weierstrass theorem, or see \cite[Lemma 2.4]{Semmes} for a simple direct proof.) Thus $L_i^{-1} h_i \in \Lip_1(X,n)$.

By Theorem~\ref{bate}, we can find, for each $i\in\mathbb{N}$, a Lipschitz function $g_i\in \Lip_1(X,n)$ satisfying
%$$ \|L_i^{-1} h_i - g_i\|_\infty < L_i^{-1} 2^{-i} \text{ and } \HH^n(g_i(X)) = 0.$$
$$ \dist(L_i^{-1} h_i , g_i) < L_i^{-1} 2^{-i} \text{ and } \HH^n(g_i(X)) = 0.$$

Consider the $1$-Lipschitz retraction $r:\RR^n \rightarrow  [0,1]^n$ given by
\begin{equation}\label{retraction}
r(x_1,x_2,\dots,x_n) = (\psi(x_1), \psi(x_2), \dots, \psi(x_n)),
\end{equation}
where 
\[\psi(t) =  \begin{cases} 
      0 & t<0 \\
			t & 0\leq t \leq 1 \\
      1 & t>1.
   \end{cases}
\]

%$\psi(t) = t$ if $t\in [0,1]$, $\psi(t) = 0$ if $t<0$, and $\psi(t) = 1$ if $t>1$.

Lastly, set
$$ f_i = r \circ (L_i g_i).$$
Since $r$ is Lipschitz and $\HH^n(g_i(X))=0$, we have  $\HH^n(f_i(X))=0$ for all $i\in\mathbb{N}$. Furthermore,
$$ \dist(f_i, g) = \dist(r\circ (L_i g_i), r\circ g) \leq \dist(L_i g_i , g) <  2^{-i}+ \dist(h_i , g) \rightarrow 0.$$
%$$ \|f_i - g\|_{\infty} = \|r\circ (L_i g_i) - r\circ g\|_\infty \leq \|L_i g_i - g\|_\infty <  2^{-i}+ \|h_i - g\|_\infty \rightarrow 0.$$

\end{proof}

To prove Theorem~\ref{mainthm}, we will also need some topological information.

\begin{definition}
Let $f\colon X\rightarrow Y$ be a continuous map between metric spaces. A point $y\in Y$ is called a \textit{stable value} of $f$ if there is $\epsilon>0$ such that $y\in g(X)$ for every continuous $g\colon X\rightarrow Y$ with $\dist(g,f)<\epsilon$.
\end{definition}

Some basic and well-known facts about stable values of mappings to $[0,1]^n$ are collected in the following lemma.
\begin{lemma}\label{stableproperties}
Let $X$ be a metric space and let $y$ be a stable value of a continuous map $f\colon X\rightarrow [0,1]^n$. Then
\begin{enumerate}[(i)]
\item $y\notin\partial\left([0,1]^n\right)$,
\item $y$ is a stable value of $g$ for each continuous $g:X\rightarrow [0,1]^n$ with $\dist(g,f)$ sufficiently small, and %obvious
\item $f(X)$ contains an open neighborhood of $y$ in $[0,1]^n$. %slightly less obvious
\end{enumerate}
\end{lemma}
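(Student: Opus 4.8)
The three claims build on one another, so I would prove them in the order (i), (ii), (iii). For part (i) I would argue by contraposition: if $y\in\partial([0,1]^n)$ then some coordinate $y_j$ equals $0$ or $1$; say $y_j=0$. I would then perturb $f$ by pushing its $j$-th coordinate away from the boundary, setting $g=r\circ(f+\delta e_j)$ for small $\delta>0$, where $r$ is the retraction from \eqref{retraction} and $e_j$ is the $j$-th standard basis vector. Since $f$ takes values in $[0,1]^n$ and $r$ fixes $[0,1]^n$, one checks $\dist(g,f)\le\delta$, while the $j$-th coordinate of $g$ is everywhere at least $\delta>0=y_j$ (for $\delta\le 1$); hence $y\notin g(X)$, contradicting stability once $\delta<\min(\epsilon,1)$. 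The case $y_j=1$ is symmetric (push downward). This shows $y$ lies in the open cube $(0,1)^n$.

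Part (ii) is a direct triangle-inequality argument and I expect it to be routine. If $y$ is a stable value of $f$ with parameter $\epsilon$ and $\dist(g,f)<\epsilon$, I would set $\epsilon'=\epsilon-\dist(g,f)>0$; then any continuous $h$ with $\dist(h,g)<\epsilon'$ satisfies $\dist(h,f)<\epsilon$, so $y\in h(X)$, proving $y$ is a stable value of $g$ with parameter $\epsilon'$.

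Part (iii) is the crux, and where I would spend the most care. Given $y'\in[0,1]^n$ with $|y-y'|$ small, I would define the perturbation $g=r\circ(f+(y-y'))$. Because $r$ is $1$-Lipschitz and fixes $[0,1]^n$, one gets $\dist(g,f)\le|y-y'|<\epsilon$, so stability yields a point $x\in X$ with $g(x)=y$. The key step is to show this forces $f(x)=y'$: writing $z=f(x)+(y-y')$, I have $r(z)=y$, and since part (i) places $y$ in the open cube, each coordinate satisfies $\psi(z_j)=y_j\in(0,1)$. Because $\psi$ takes values in $(0,1)$ only on the interval $(0,1)$, this forces $z_j=y_j$, i.e. $z=y$ and hence $f(x)=y'$. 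Thus every $y'$ sufficiently close to $y$ lies in $f(X)$, and since $y$ is interior this produces an open neighborhood of $y$ contained in $f(X)$.

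The only real obstacle lies in part (iii): the retraction $r$ could in principle destroy the exact hit $g(x)=y$, and the argument succeeds precisely because part (i) guarantees that $y$ avoids the boundary, so that $\psi$ is in its linear (identity) range at the relevant coordinate values. Establishing (i) first is therefore essential to the structure of the proof.
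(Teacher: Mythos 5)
Your proposal is correct and, for the crucial part (iii), follows essentially the same argument as the paper: perturb $f$ by $r\circ(f+(y-y'))$, use stability to find a point mapping to $y$, and use the interiority of $y$ from part (i) to undo the retraction. The only difference is that you supply explicit arguments for parts (i) and (ii) (your boundary-pushing perturbation for (i) is correct), whereas the paper disposes of (i) by citing Hurewicz--Wallman and of (ii) as immediate from the definition.
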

\begin{proof}
Part (i) is simple and explained in \cite[Example VI 4]{HW}. Part (ii) is an immediate consequence of the definition of stable value.

For part (iii), recall the $1$-Lipschitz retraction $r\colon\RR^n \rightarrow  [0,1]^n$ defined in \eqref{retraction}. Note that $r$ maps $\RR^n\setminus [0,1]^n$ onto the boundary of $[0,1]^n$.

Let $y$ be a stable value of $f:X\rightarrow [0,1]^n$, with parameter $\epsilon>0$. Then, by part (i), $y\in (0,1)^n$. %Let $2\delta>0$ denote the minimum of $\epsilon$ and distance of $y$ to the boundary.
We claim that $f(X)$ contains $B(y,\epsilon) \cap [0,1]^n$. Consider any $y'\in B(y,\epsilon)\cap [0,1]^n$. The formula
$$ h(x) = r(x+y-y')$$
defines a continuous map from $[0,1]^n$ to itself such that $h(y') = y$ and $|h(x)-x|<\epsilon$ for all $x\in [0,1]^n$.

Consider the map $g:X\rightarrow [0,1]^n$ defined by $g=h\circ f$. Then $\dist(f,g)<\epsilon$, so $g(x)=y$ for some $x\in X$. Therefore,
$$ r(f(x) + y - y') = y.$$
Since $y$ is not on the boundary of $[0,1]^n$, we must have $f(x) + y-y'= y$, i.e., $f(x)=y'$. 
\end{proof}

The following theorem is the second main ingredient in the proof of Theorem~\ref{mainthm}.
\begin{thm}[Theorem III.1 of \cite{Nagata}]\label{stable}
Let $X$ be a compact metric space of topological dimension $n$. Then there is a continuous map $g\colon X\rightarrow [0,1]^n$ with a stable value.
\end{thm}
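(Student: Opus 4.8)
The plan is to reduce the statement to the standard characterization of covering dimension in terms of essential families of separators, and then to build the desired map directly by Urysohn's lemma; note that only the inequality $\dim X \geq n$ is used.

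First I would invoke the Alexandroff--Eilenberg--Otto characterization of dimension (which is available in \cite{Nagata} alongside the equivalence of the various notions of dimension): since $X$ is a compact metric space with $\dim X = n$, there exist $n$ pairs of disjoint nonempty closed sets $(A_i, B_i)$, $i = 1,\dots,n$, forming an \emph{essential family}. This means that whenever $C_i\subseteq X$ is a closed set separating $A_i$ from $B_i$ --- that is, $X\setminus C_i = U_i \cup V_i$ with $U_i, V_i$ disjoint open sets satisfying $A_i\subseteq U_i$ and $B_i\subseteq V_i$ --- one necessarily has $\bigcap_{i=1}^n C_i \neq \emptyset$. Next, for each $i$ I would apply Urysohn's lemma to obtain a continuous function $f_i\colon X\to [0,1]$ with $f_i\equiv 0$ on $A_i$ and $f_i\equiv 1$ on $B_i$, and then set $f = (f_1,\dots,f_n)\colon X\to [0,1]^n$. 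The claim is that the center point $y_0 = (\tfrac12,\dots,\tfrac12)$ is a stable value of $f$, with parameter $\epsilon = \tfrac12$.

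To verify the claim, let $g = (g_1,\dots,g_n)\colon X\to[0,1]^n$ be continuous with $\dist(g,f) < \tfrac12$. Then on $A_i$ we have $g_i < \tfrac12$ and on $B_i$ we have $g_i > \tfrac12$, so the closed set $C_i := g_i^{-1}(\tfrac12)$ separates $A_i$ from $B_i$: indeed $X\setminus C_i = g_i^{-1}([0,\tfrac12)) \cup g_i^{-1}((\tfrac12,1])$ exhibits the required disjoint open sets. By the essential family property, $\bigcap_{i=1}^n C_i \neq \emptyset$, and any point $x$ in this intersection satisfies $g_i(x)=\tfrac12$ for every $i$, i.e. $g(x)=y_0$. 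Hence $y_0\in g(X)$ for all such $g$, so $y_0$ is a stable value of $f$.

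The main obstacle is entirely contained in the first step: the essential family (partition) characterization of covering dimension is itself a substantial theorem of dimension theory, and it is here that the topological hypothesis $\dim X \geq n$ is genuinely consumed. Once that characterization is granted, the assembly of $f$ via Urysohn's lemma and the separator argument establishing stability of $y_0$ are routine. I would therefore present the reduction to the essential family characterization as the conceptual heart of the argument and cite \cite{Nagata} for that characterization itself.
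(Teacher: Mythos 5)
Your argument is correct. Note, though, that the paper does not prove this statement at all: it is quoted verbatim as Theorem III.1 of \cite{Nagata} and used as a black box, so there is no internal proof to compare against. What you have written is essentially the standard textbook derivation: reduce to the Eilenberg--Otto/Alexandroff characterization of $\dim X \geq n$ via an essential family of $n$ pairs of disjoint closed sets, realize the pairs by Urysohn functions, and observe that any $g$ within $\tfrac12$ of $f=(f_1,\dots,f_n)$ in the sup distance forces each level set $g_i^{-1}(\tfrac12)$ to be a separator of $(A_i,B_i)$, whence the essentiality gives a common point mapping to $(\tfrac12,\dots,\tfrac12)$. The separator verification is airtight (the preimages $g_i^{-1}([0,\tfrac12))$ and $g_i^{-1}((\tfrac12,1])$ are the required disjoint open sets, and the estimate $\dist(g,f)<\tfrac12$ controls each coordinate regardless of which norm on $[0,1]^n$ is used), and you correctly flag that the entire topological content is concentrated in the essential-family characterization, which is exactly the deep input the paper outsources to \cite{Nagata}. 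So your proposal is a legitimate, self-contained replacement for the citation, at the cost of still citing a substantial theorem of dimension theory in its first step; the paper's choice to cite the stable-value statement directly is merely a more compressed packaging of the same underlying result.
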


The technique of using stable values to find some rectifiable structure in a metric space was used by David and Semmes \cite[Section 12.3]{DS} and Bonk and Kleiner \cite{BK} in similar contexts.

\begin{proof}[Proof of Theorem~\ref{mainthm}]

Let $X$ be a compact metric space of topological dimension $n$ and $\HH^n(X)<\infty$. We claim that $X$ contains an $n$-rectifiable subset of positive measure. Suppose, to the contrary, that $X$ is purely $n$-unrectifiable.

Then, by Theorem~\ref{stable}, there is a continuous map $g\colon X\rightarrow [0,1]^n$ with a stable value $y$. By Corollary~\ref{cor}, there is a sequence $f_i$ of Lipschitz maps from $X$ to $[0,1]^n$ that converge to $g$ in the supremum distance and satisfy
\begin{equation}\label{zeromeasure}
\HH^n(f_i(X)) = 0
\end{equation}
for all $i\in\mathbb{N}$.

On the other hand, by Lemma~\ref{stableproperties}(ii), when $i\in\mathbb{N}$ is sufficiently large, the map $f_i$ must also have $y$ as a stable value. In that case, $g_i(X)$ contains an open subset of $[0,1]^n$, by Lemma~\ref{stableproperties}(iii). This contradicts \eqref{zeromeasure}.
\end{proof}

\bibliography{topdimbib}{}
\bibliographystyle{plain}

\end{document}